\def \N {\mathbb{N}}
\def \Z {\mathbb{Z}}
\def \CF {\mathcal{CF}}
\def \A {\mathcal{A}}
\def \B {\mathcal{B}}
\def \ol {\overline}
\def \ra {\rightarrow}
\def \Ra {\Rightarrow}
\def \FIM {\operatorname{FIM}}
\def \WP {\operatorname{WP}}
\def \coWP {\operatorname{coWP}}
\def \olx {\overline{x}}
\def \inv {\mathrm{inv}}
\def \rev {\mathrm{rev}}
\def \posid {{E^+}}
\def \negid {{E^-}}
\def \updown {A}
\def \downup {B}
\def \up {C}
\def \down {D}
\def \sup {E}
\def \sdown {F}
\def \mup {C'}
\def \mdown {D'}
\def \wiggle {Z}
\def \nN {\mathcal{N}}
\newtheorem{conj}{Conjecture}
\begin{document}

\title{Word Problem Languages for Free Inverse Monoids}

\author{Tara Brough
\thanks{The author was supported by the Funda\c{c}\~{a}o para a Ci\^{e}ncia e a Tecnologia (Portuguese Foundation for Science and Technology)
through an {\sc FCT} post-doctoral fellowship ({\sc SFRH}/{\sc BPD}/121469/2016) and the projects {\sc UID}/Multi/04621/2013 (CEMAT-CI\^ENCIAS) 
and {\sc UID}/{\sc MAT}/00297/2013
  (Centro de Matem\'{a}tica e Aplica\c{c}\~{o}es).
  }
  }
\institute{
Centro de Matem\'{a}tica e Aplica\c{c}\~{o}es,
Faculdade de Ci\^{e}ncias e Tecnologia,
Universidade Nova de Lisboa,
2829--516 Caparica,
Portugal\\
\email{
t.brough@fct.unl.pt
}
}

\maketitle

\begin{abstract}
This paper considers the word problem for free inverse monoids of finite rank from a 
language theory perspective.  It is shown that no free inverse monoid has context-free word problem; that the word problem of the free inverse monoid of rank $1$ 
is both $2$-context-free (an intersection of two context-free languages) and ET0L;
that the co-word problem of the free inverse monoid of rank $1$ is context-free;
and that the word problem of a free inverse monoid 
of rank greater than $1$ is not poly-context-free.

\begin{keywords}
Word problems; Co-word problems; Inverse monoids; ET0L languages; 
Stack automata; Poly-context-free languages
\end{keywords}
\end{abstract}

\section{Introduction}

The word problem of a finitely generated semigroup is, informally, 
the problem of 
deciding whether two words over a given finite generating set
represent the same element of the semigroup.  Although it is 
undecidable \cite{Post}, even for finitely presented 
groups \cite{Nov,Boo}, there has been much study
(especially for groups)
of word problems that are in some sense `easily' decidable, for 
example by having low space or time complexity, or being in
certain low-complexity language classes.

For groups, the obvious formalisation of the word problem is
as the set of all words over the set of generators and their
inverses representing
the identity element, since if $u$ and $v$ are words representing
the same element then $uv^{-1}$ represents the identity.  
This has been generalised to semigroups in two ways: 
the first, which we shall call the \emph{word problem} of 
a semigroup $S$ with respect to finite generating 
set $A$ is the set 
$\WP(S,A) = \{u\#v^\rev \mid u=_S v, u,v\in A^+\}$
(where $\#$ is a symbol not in $A$);
the second, the \emph{two-tape word problem} of 
$S$ with respect to $A$, is the relation
$\iota(S,A) = \{(u,v)\in A^+\times A^+ \mid u=_S v\}$.
Monoid versions of these are obtained by 
replacing $A^+$ with $A^*$.
The word problem has been studied in \cite{DG,HHOT,HOT}
and the two-tape word problem in \cite{Pfe,Bro}.

A semigroup $S$ is \emph{inverse} if for every 
$x\in S$ there is a \emph{unique} $y\in S$ 
such that $xyx=x$ and $yxy=y$.
The classes of inverse semigroups and inverse
monoids each form varieties of algebras and 
hence contain free objects.
The free inverse monoid on a set $X$ is denoted 
$\FIM(X)$; if $|X|=k$ then we also use the 
notation $\FIM_k$, and $k$ is called the \emph{rank}
of $\FIM_k$.
All results in this paper are
stated for free inverse monoids, but are equally true for 
free inverse semigroups, since in a free inverse monoid the 
only representative of the identity is the empty word $\epsilon$.

Word problems for free inverse monoids have already 
been studied from a time and space complexity perspective: 
the word problem of a free inverse monoid of 
finite rank is recognisable in linear time and in logarithmic 
space \cite{LO}.  The aim of this paper is to understand these 
word problems from a language-theoretic perspective.
All free inverse monoid word problems are context-sensitive, 
since this is equivalent to recognisability in linear space.  
Our main goal is thus
to determine in which of the many subclasses of the context-sensitive
languages the free inverse monoid word problem might lie.   
Before summarising the results, we introduce several of the 
language classes considered.
All classes mentioned here are closed under inverse generalised sequential
machine mappings, and hence the property of having word problem in any of these classes is closed under change of finite generating set.  

The non-closure of the class $\CF$ of context-free languages under
complementation and intersection \cite{HopUll}
leads naturally to the definition of
the classes of \emph{co$\CF$} and \emph{poly-$\CF$} languages, 
being respectively the classes of complements and finite intersections of 
context-free languages.  A language is called \emph{$k$-$\CF$}
if it is an intersection of $k$ context-free languages.
Groups with context-free co-word problem were studied in 
\cite{HRRT}, and groups with poly-context-free word problem
in \cite{Bro14}.
For groups, having co$\CF$ word problem is equivalent to 
the \emph{co-word problem} (the complement of the word problem, 
or abstractly the problem of deciding whether two words represent
\emph{different} elements) being context-free.  For monoids, we 
generalise this terminology on the abstract rather than technical level:
the complement of the word problem is not an algebraically interesting
language, so to define the \emph{co-word problem} of a monoid we
replace $u=_M v$ by $u\neq_M v$ (for the two-tape word problem
this is the same as taking the complement).  

\emph{Stack automata}, introduced in \cite{GGH2},
are a generalisation of 
pushdown automata that allow the contents of the stack to be examined in 
`read-only' mode.  They are a special case of the
\emph{nested stack automata} introduced slightly 
later by Aho~\cite{Aho} to recognise indexed languages.
The \emph{checking stack languages} are recognised by 
the more restricted
\emph{checking stack automata} \cite{Gre}, in which
the stack contents can only be altered 
prior to commencing reading of the input.

\emph{ET0L languages} are another subclass of indexed languages,
standardly defined by \emph{ET0L-systems}, which are essentially
finite collections of `tables' of context-free-grammar-type productions.
These operate similarly to context-free grammars except that at 
each step in a derivation, productions all from the same `table' must be
applied to \emph{every} nonterminal in the current string
(each table is required to have productions from every 
nonterminal, though these of course may be trivial).  
The more restricted \emph{EDT0L languages} have the further requirement
that in each table of productions there be only one production from each 
nonterminal.
An automaton model for ET0L languages was given in \cite{vLee}:
it consists of a checking stack with attached push-down stack, 
operating in such a way that the pointers of the two stacks move together.
See \cite{SR} for further information on ET0L languages and their 
many relatives.

In the rank $1$ case our goal is achieved fairly comprehensively, 
with both types of word problem for $\FIM_1$
being shown to be $2$-$\CF$ (but not context-free), 
co-$\CF$ and a checking stack language 
(and hence ET0L).  As far as the author is aware, this 
is the first known example of a semigroup with ET0L
but not context-free word problem.  This result is particularly 
interesting because of the long-standing open problem
of whether the indexed languages
-- of which the ET0L languages form a subclass -- give 
any additional power over context-free languages for
recognising word problems of groups \cite{LR,GS}.
In higher ranks we show that 
$\WP(\FIM_k)$ for $k\geq 2$ is not poly-$\CF$.
We conjecture that the same is true for $\iota(\FIM_k)$,
and that neither version of the word problem is co$\CF$
or indexed except in rank~$1$.



\section{Background}


\subsection{Free inverse monoids}

Recall that a monoid $M$ is \emph{inverse} if for every 
$x\in M$ there is a unique $y\in M$
such that $xyx = x$ and $yxy = y$.  
The element $y$ is called the \emph{inverse of $x$}
and is usually denoted $x^{-1}$.  
In this paper we will also often use the notation 
$\olx$ for the inverse of $x$.
Given a set $X$, we use the notation $X^{-1}$
for a set $\{\olx \mid x\in X\}$ of formal inverses for $X$,
and $X^\pm$ for $X\cup X^{-1}$.
For an element $x\in X^\pm$, if $x\in X$ then
$x^{-1}=\olx$, while if $x=\ol{y}$ for $y\in X$ then 
$x^{-1}=y$.  We can extend this to define the inverse 
of a word $w = w_1\ldots w_n$ with $w_i\in X^\pm$
by $w^\inv = w_n^{-1}\ldots w_1^{-1}$.

For any set $X$, the \emph{free inverse monoid}
$\FIM(X)$ on $X$ exists and is given by the monoid presentation
\[ \FIM(X) = \langle X^\pm \mid u = uu^\inv u, uu^\inv = u^\inv u
\; (u\in (X^\pm)^*)\rangle.\]

This presentation is not particularly useful for working 
with the word problem of free inverse monoids.
A much more powerful tool is given by \emph{Munn trees}
\cite{Mun}.
These are certain labelled directed finite trees that stand in 
one-to-one correspondence with the elements of $\FIM(X)$,
and such that the product of two elements can easily be 
computed using their corresponding trees.
To obtain the Munn tree for an element $m\in \FIM(X)$,
we use the Cayley graph $\mathcal{G}(F(X),X)$ 
of the free group $F(X)$.
This can be viewed as a labelled directed tree with 
$|X|$ edges labelled by the elements of $X$ entering 
and leaving at each vertex.  (The Cayley graph also 
technically has its vertices labelled by the elements of $G$,
but these are not needed for our purposes.)  
Given any word $w\in (X^\pm)^*$ representing $m$,
we choose any vertex of $\mathcal{G}(F(X),X)$ as the 
start vertex and label it $\alpha$.  From vertex $\alpha$,
we then trace out the path defined by reading $w$
from left to right, where for $x\in X$, we follow the edge
labelled $x$ leading \emph{from} the current vertex upon 
reading $x$, and follow the edge labelled $x$ leading 
\emph{to} the current vertex upon reading $\olx$.
We mark the final vertex of the path thus traced as 
$\omega$, and remove all edges not traversed during 
reading of $w$.  The result is the \emph{Munn tree} of 
$w$, and the free inverse monoid relations ensure that 
two words produce the same Munn tree if and only if they 
represent the same element of $\FIM(X)$.

To multiply two Munn trees, simply attach the start vertex
of the second tree to the end vertex of the first tree, and 
identify any edges with the same label and direction 
issuing from or entering the same vertex.
From this it can be seen that the \emph{idempotents}
(elements $x$ such that $x^2=x$) in $\FIM(X)$ are those elements
whose Munn trees have $\alpha=\omega$, 
and that these elements commute.

\subsection{Word problems of inverse monoids}

Two notions of word problem for inverse monoids 
will occur throughout this paper.  
For an inverse monoid $M$ with finite generating set $X$, 
the \emph{word problem} of $M$ with respect to $X$ is the set 
\[ \WP(M,X) = \{ u\#v^\inv \mid u,v\in (X^\pm)^*, u=_M v\},\]
while the \emph{two-tape word problem} of $M$ with 
respect to $X$ is
\[\iota(M,X) = \{ (u,v)\in (X^\pm)^*\times (X^\pm)^* \mid u=_M v\}. \]
If the generating set $X$ is irrelevant, we may use the notation
$\WP(M)$ or $\iota(M)$.

Each of these notions generalises the definition
of the group word problem $W(G,X)$ as the set of all words over 
$X^\pm$ representing the identity.  
If $M$ is a group, then $W(G,X)$ and $\WP(G,X)$ are obtained 
from each other by very simple operations (deletion or insertion 
of a single $\#$), and so membership in any `reasonable' language
class will not depend on whether we consider the group or inverse 
monoid word problem.
For the two-tape word problem the generalisation is of a more 
algebraic nature: $\iota(M,X)$ and $W(G,X)$ are each the lift 
to $(X^\pm)^*$ of the natural homomorphism from the free 
inverse monoid (respectively free group) on $X$ to $M$
(respectively $G$).  The kernel of a group homomorphism is a 
set, while the kernel of a semigroup homomorphism is a relation.

The word problem for semigroups in general has been studied 
in \cite{HHOT}, where it is defined as the set of words 
$u\#v^\rev$ with $u$ and $v$ representing the same element.
For inverse monoids, this is equivalent to the 
word problem considered here, since $u\#v^\inv$ is obtained 
from $u\#v^\rev$ by simply replacing every symbol after the $\#$ 
by its inverse.  This operation can be viewed as an inverse generalised
sequential machine mapping, and thus all classes of languages we 
consider are closed under it (and hence all results in this paper hold
for the definition in \cite{HHOT} as well).  
Note that it is still essential to 
include the `dividing symbol' $\#$: as an example, if $F = \FIM(X)$
and $x\in X$, then $x\#\olx\in \WP(F,X)$, but 
$x\olx\#\notin \WP(F,X)$.


\section{The rank $1$ case}\label{rank1}

Since the free group of rank $1$ is isomorphic to $(\Z,+)$,
Munn trees in the rank $1$ case can be viewed as intervals of integers containing 
zero (the starting point $\alpha$), with a marked point ($\omega$).  
This allows elements of $\FIM_1$ to be represented 
by a $3$-tuple of integers $(-l,n,m)$ with $l,n\in \N_0$ and $-l\leq m\leq n$, where 
$[-l,n]$ is the interval spanned by the Munn tree and $m$ is the marked point.
Multiplication in this representation of $\FIM_1$ is given by
\[ (-l,n,m) (-l',n',m') = (-l\land (m-l'), n\lor (m+n'), m+m'). \]
Equipped with this model of $\FIM_1$, we can determine that free inverse monoids
never have context-free word problem.

\begin{theorem}\label{notCF}
For any $k\in \N$, neither $\WP(\FIM_k)$ nor $\iota(\FIM_k)$ is context-free.
\end{theorem}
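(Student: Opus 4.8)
The plan is to prove non-context-freeness by exhibiting, for each $k$, a language in the desired class whose intersection with a regular language (or image under a nice map) is a classical non-context-free language such as $\{a^nb^nc^n\}$, and then invoke closure of $\CF$ under intersection with regular sets and under inverse gsm mappings. Since $\FIM_1$ embeds in $\FIM_k$ for all $k\geq 1$, and the classes we consider are closed under the operations relating $\WP$ and $\iota$, it suffices to handle the rank $1$ case; the higher-rank cases then follow by restricting to a single generator $x\in X$ and intersecting with the regular language $\{x,\olx,\#\}^*$.

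First I would work in the integer model of $\FIM_1$, where an element is a triple $(-l,n,m)$ recording the spanned interval $[-l,n]$ and the marked endpoint $m$. The key observation is that a word $w\in\{x,\olx\}^*$ represents the element whose interval is $[\min,\max]$ of the partial sums (reading $x$ as $+1$ and $\olx$ as $-1$, starting from $0$) and whose marked point is the final partial sum. So two words $u,v$ are equal in $\FIM_1$ exactly when they have the same running minimum, running maximum, and total sum. I would then design a concrete family of test words. A natural choice is to consider words of the form $u = x^n\olx^{2n}x^n$ and compare the span and endpoint data against a second word $v$; the intersection of $\WP(\FIM_1)$ with a suitably chosen regular language of the shape $x^*\olx^*x^*\#\,x^*\olx^*x^*$ forces equalities among three independent exponent parameters.

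The heart of the argument is to pick the regular constraint so that membership in $\WP(\FIM_1)$ pins down a triple of exponents $(a,b,c)$ satisfying two simultaneous linear equations — one equating the maxima, one equating the minima (with the total-sum equation following) — in such a way that the resulting intersection is a linear-algebraic encoding of $\{a^pb^qc^r \mid \text{two independent constraints}\}$ that is provably not context-free by the pumping lemma or by Parikh's theorem. Concretely, I expect to arrange that the left word contributes a fixed profile while the right word $v = x^p\olx^q x^r$ must simultaneously match the maximum (giving $p = $ const), the minimum (giving $p-q = $ const), and the endpoint (giving $p-q+r = $ const); choosing the constants to grow with a single parameter yields a language whose Parikh image is not semilinear in the required coordinates, or more simply reduces to $\{a^nb^nc^n\}$ after an inverse gsm map.

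The main obstacle will be the bookkeeping of the span and marked-point conditions: unlike the group case where only the total sum matters, here equality in $\FIM_1$ is governed by three quantities (running min, running max, endpoint), and I must choose the regular scaffold carefully so that all three conditions are active and together force a genuinely non-context-free set of exponent tuples rather than collapsing to a single linear equation (which would remain context-free). Getting the test words to decouple these three constraints — so that the intersection isolates something of $a^nb^nc^n$ type rather than a merely linear condition — is the delicate step; once the right words are in hand, the non-context-freeness itself is a routine pumping-lemma verification, and the reduction to arbitrary rank $k$ and to $\iota$ rather than $\WP$ is immediate from the stated closure properties.
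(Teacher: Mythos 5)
Your overall strategy---restrict to a single generator, intersect $\WP(\FIM_1)$ with a regular scaffold of $x$/$\olx$ blocks, and pump---is exactly the paper's strategy (it uses the scaffold $x^*\olx^*x^*\#\olx^*$ and the witness $x^n\olx^nx^n\#\olx^n$), but your proposal stops short at precisely the step that constitutes the proof, and one of the two mechanisms you offer for it cannot work at all. The Parikh route is dead on arrival: any intersection of $\WP(\FIM_1)$ with a scaffold of the shape you describe is a bounded language whose set of exponent tuples is cut out by linear equalities and inequalities among running maxima, running minima and endpoints (e.g.\ $\max(a,a-b+c)=\max(p,p-q+r)$ for $u=x^a\olx^bx^c$, $v=x^p\olx^qx^r$), hence is \emph{semilinear}; Parikh's theorem therefore cannot separate it from the context-free languages. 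For bounded languages the correct dividing line is Ginsburg's \emph{stratified} semilinearity---which is what the paper invokes via \cite{Bro14} in the higher-rank argument---and mere semilinearity never fails in your setting. Your ``fixed profile'' device also cannot be implemented as stated: no regular language pins the left word to $x^n\olx^{2n}x^n$, so after intersecting, the exponents of $u$ remain free parameters and the conclusions ``$p=$ const'', ``$p-q=$ const'' are not available; the membership conditions genuinely relate two tuples of free exponents through piecewise-linear ($\max$/$\min$) constraints.

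Consequently, non-context-freeness has to be established by a pumping-lemma case analysis: for a tight witness, the pumped factors can touch at most two adjacent blocks, and one must check for \emph{every} adjacent pair that some pumped word violates one of the three matching conditions. That case analysis is the proof, not a routine afterthought---it is exactly the table in the paper's argument, where the four-block scaffold keeps the analysis to three cases (membership there reduces to $b\le a$, $b\le c$, $d=a-b+c$, all tight at the witness), whereas your six-block scaffold multiplies the cases and the $\max$/$\min$ branchings. Finally, the claim that the $\iota(\FIM_k)$ case is ``immediate from the stated closure properties'' is a further gap: none of the closure operations mentioned converts the two-tape relation into the $\#$-language (in $u\#v^\inv$ the second word appears inverted, while a two-tape automaton reads $v$ left to right), and the paper accordingly re-runs the pumping argument directly on the pairs $(x^n\olx^nx^n,\,x^n)$.
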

\begin{proof}
Suppose that $\WP(\FIM_k,X)$ is context-free ($X$ any finite generating set 
of $\FIM_k$).  Then for any $x\in X$, the language
$L := \WP(\FIM_k,X)\cap x^*\olx^*x^*\#\olx^*$ is also context-free.
For $n\in \N$, let $w_n = x^n \olx^n x^n \# \olx^n$, which is in $L$ for all $n\in \N$.
For $n$ greater than the pumping length $p$ of $L$, we can express $w_n$ in the form
$uvwyz$ such that $|vy|\geq 1$, $|vwy|\leq p$, and the strings $v,y$ can simultaneously
be `pumped'.  Thus there must exist $i,j\in \N_0$, not both zero, such that all strings of one
of the following three forms must be in $L$ for $m\geq -1$:
\[
x^{n+im} \olx^{n+jm} x^n \# \olx^n,  \quad
x^n \olx^{n+im} x^{n+jm} \# \olx^n,   \quad
x^n \olx^n x^{n+im} \# \olx^{n+jm}.
\]

However, in all cases, some words of the given form are not in $L$:
\[
\begin{tabular}{l | c}
Word form &  Not in $L$ for\\
\hline
$x^{n+im} \olx^{n+jm} x^n \# \olx^n$ & $(i\neq 0 \land m\geq 1) \lor (i=0 \land j\neq 0 \land m\geq 1)$\\
$x^n \olx^{n+im} x^{n+jm} \# \olx^n$ & as above\\
$x^n \olx^n x^{n+im} \# \olx^{n+jm}$ & $( j\neq 0 \land m=-1) \lor (j=0 \land i\neq 0 \land m\geq 1)$
\end{tabular}
\]
Hence $L$, and therefore $\WP(\FIM_k,X)$, is not context-free.
The proof for $\iota(\FIM_k,X)$ is similar, using the pumping lemma
on $(x^n \olx^n x^n, x^n)$ for sufficiently large $n$.  
\qed
\end{proof}

For the remainder of this section, let $\FIM_1$ be generated by 
$X = \{x\}$ and let $Y=X^\pm=\{x,\olx\}$.  
For $w\in Y^*$, denote the image of $w$ in $\FIM_1$ by $\hat{w}$.
We define functions $\lambda, \nu$ and $\mu$
from $Y^*$ to $\Z$ by setting $(-\lambda(w), \nu(w), \mu(w) )= \hat{w}$.
It will often be helpful to regard words in $Y^*$ as paths in the integers
starting at $0$, 
with $x$ representing a step in the positive direction and $\olx$ a step in the 
negative direction.  We will refer to and visualise these directions as
right (positive) and left (negative).
Thus for $w\in Y^*$ the path traced out by $w$ has rightmost point 
$\nu(w)$, leftmost point $-\lambda(w)$ and endpoint $\mu(w)$.

The idempotents in $\FIM_1$ are the elements
$(-l,n,0)$ for $l,n\in \N_0$.  We define the set 
of \emph{positive idempotents} 
$\posid = \{ (0,n,0) \mid n\in \N_0 \}$
and similarly the set of \emph{negative idempotents}
$\negid = \{ (-l,0,0) \mid l\in \N_0 \}$ in $\FIM_1$.
(Note that in these definitions, the identity $(0,0,0)$ is
counted as both a positive and a negative idempotent.)
Grammars for the sets of positive and negative idempotents form an 
important building block in Theorem~\ref{2CF} (as well as in the 
ET0L grammar mentioned following Corollary~\ref{ET0L}).

\begin{lemma}\label{downzero}
Let $Y = \{x,\olx\}$ and $L_{\posid} = \{w\in Y^* \mid \hat{w} \in \posid\}$.
Then $L_\posid$ is generated by the context-free grammar 
$\Gamma^+ = (\{S\},Y,P^+,S)$ with $P^+$ consisting of 
productions $P_1: S\rightarrow SS$, $P_2: S\rightarrow x S \olx$
and $P_3: S\rightarrow \varepsilon$.
Similarly, $L_\negid := \{w\in Y^* \mid \hat{w}\in \negid\}$
is generated by the context-free grammar 
$\Gamma^- = (\{S\}, Y, P^-, S)$ with $P^-$ the same as 
$P^+$ except that $P_2$ is replaced by $P'_2: S\rightarrow \olx S x$.
\end{lemma}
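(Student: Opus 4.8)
The plan is to recognise $L_\posid$ as the Dyck language on the single bracket pair $(x,\olx)$ and then prove the two inclusions $L(\Gamma^+)\subseteq L_\posid$ and $L_\posid\subseteq L(\Gamma^+)$ separately, deducing the negative case from a reflection symmetry. First I would unwind the definitions: $\hat w\in\posid$ means $\lambda(w)=0$ and $\mu(w)=0$, with $\nu(w)$ unconstrained. Viewing $w$ as a path from $0$ in which $x$ is a $+1$ step and $\olx$ a $-1$ step, this says precisely that the path returns to $0$ and never drops below $0$. Thus $L_\posid$ is the set of balanced words over $Y$ every prefix of which contains at least as many $x$'s as $\olx$'s.

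For the inclusion $L(\Gamma^+)\subseteq L_\posid$ I would induct on the length of a derivation, checking that each production preserves the property ``balanced and never negative''. This is immediate for $P_3$. For $P_2$, prepending $x$ and appending $\olx$ shifts the interior path up by one and supplies a closing step, so from $\hat w=(0,n,0)$ one computes $\widehat{xw\olx}=(0,n+1,0)\in\posid$. For $P_1$, the concatenation of two loops that each end at $0$ and stay $\ge 0$ again ends at $0$ and stays $\ge 0$; concretely $\mu$ is additive and the leftmost and rightmost coordinates are governed by the min and max appearing in the multiplication formula, giving $\hat w_1=(0,n_1,0)$, $\hat w_2=(0,n_2,0)\Longrightarrow\widehat{w_1w_2}=(0,\max(n_1,n_2),0)$.

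The substantial direction is $L_\posid\subseteq L(\Gamma^+)$, and I expect the first-return decomposition to be the main thing to set up cleanly. Given a nonempty $w=w_1\cdots w_n\in L_\posid$, its first letter must be $x$, since $w_1=\olx$ would send the path to $-1$ and force $\lambda(w)\ge 1$. Let $p$ be the least index with $w_1\cdots w_p$ balanced; then necessarily $w_p=\olx$, and I would verify that $u:=w_2\cdots w_{p-1}$ and $v:=w_{p+1}\cdots w_n$ both lie in $L_\posid$. For $u$ this uses that the path lies strictly above $0$ at every position $1,\dots,p-1$, so deleting the enclosing $x,\olx$ leaves a nonnegative balanced word; for $v$ it uses that the path is back at height $0$ at position $p$, so every later prefix is still nonnegative. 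Since $|u|,|v|<n$, strong induction on $|w|$ applies: if $p=n$ then $w=xu\olx$ is derived by $P_2$ followed by a derivation of $u$; if $p<n$ then $w=(xu\olx)v$ is derived by $P_1$, then $P_2$ together with derivations of $u$ and $v$. The base case $w=\varepsilon$ is handled by $P_3$.

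Finally, the negative case requires no new argument. The automorphism $\sigma$ of $Y^*$ interchanging $x$ and $\olx$ reflects every path through $0$, whence $\sigma(L_\posid)=L_\negid$; and $\sigma$ carries the productions of $\Gamma^+$ onto those of $\Gamma^-$, fixing $P_1$ and $P_3$ and sending $P_2$ to $P'_2$. Applying $\sigma$ to any derivation in $\Gamma^+$ therefore yields a derivation in $\Gamma^-$ and conversely, so $\Gamma^-$ generates $\sigma(L_\posid)=L_\negid$, completing the proof.
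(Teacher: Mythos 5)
Your proof is correct and takes essentially the same approach as the paper: both hinge on the first-return factorization $w=(xu\olx)v$ of a nonempty nonnegative loop and an induction on word length (the paper merely packages your two inclusions into a single iff-chain in its length induction). The only cosmetic difference is the symmetry used for $L_\negid$: you apply the reflection automorphism interchanging $x$ and $\olx$, while the paper uses word reversal; both are one-line arguments carrying $\Gamma^+$ onto $\Gamma^-$.
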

\begin{proof}
We can view $L_\posid$ as the language of all paths starting and ending at 
$0$ and never crossing to the left of $0$.  
Concatenating two such paths gives another such path, so we have 
$(L_\posid)^* = L_\posid$.
Let $L$ be the language of all paths in $Y^*$ that start and 
end at $0$ without visiting $0$ in between.  Then $L_\posid = L^*$
and $w\in Y^*$ is in $L$ if and only if either $w=\varepsilon$ or there exists 
$v\in L_\posid$ such that $w = xv\olx$.  
That is, $L_\posid = (xL_\posid\olx)^*$.

Let $M$ be the language generated by $\Gamma^+$.
We show by induction on the length of words that 
$L_\posid = M$.
Note that for any $w$ in $L_\posid$ or $M$ we have $|w|_x = |w|_{\olx}$,
so both languages consist of words of even length.
To begin with, $L_\posid\cap Y^0 = M\cap Y^0 = \{\varepsilon\}$.
Now suppose that $L_\posid\cap Y^{2i} = M\cap Y^{2i}$
for all $i< n$.
For $w\in Y^{2n}$, we have $w\in L_\posid$ if and only if either
$w = w_1 w_2$  or $w = xv\olx$ for 
some $w_1,w_2,v\in L_\posid$.
By induction, this occurs iff $w_1,w_2\in M$ respectively $v\in M$, iff
$S\ra SS\Ra w_1w_2$ respectively $S\ra xS\olx \Ra xv\olx$ in $\Gamma^+$,
iff $w\in M$.
Hence $L_\posid = M$.
The language $L_\negid$ is the reverse of $L_\posid$,
and the grammars $\Gamma^+$ and $\Gamma^-$ are 
the reverse of one another, hence $L_\negid$ is generated
by $\Gamma^-$.
\qed
\end{proof}

A word $u\#v^\inv$ with $u,v\in Y^*$ is in $\WP(\FIM_1,X)$ 
if and only if it traces out a path starting and ending at $0$ which 
reaches its rightmost and leftmost points each at least once before and 
at least once after the $\#$.  If the minimum or maximum is achieved at the 
end of $u$, this counts as being achieved both before and after $\#$.  
(The path must end at $0$ because if 
$\hat{u} = \hat{v}$ then $\hat{u}(\hat{v})^{-1}$ is an idempotent.) 
We now show that although the word problem of $\FIM_1$ is not 
context-free, it can be expressed as an intersection of two 
context-free languages.  

\begin{theorem}\label{2CF}
$\WP(\FIM_1)$  and $\iota(\FIM_1)$ are both $2$-$\CF$.
\end{theorem}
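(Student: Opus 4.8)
The plan is to read off, from the path characterisation immediately preceding the statement, a description of $\WP(\FIM_1)$ purely in terms of the walk on $\Z$ traced by $u\#v^\inv$ (with a marked position at $\#$). Membership is equivalent to the conjunction of three conditions: (E) the walk ends at $0$; (M$^+$) its global maximum is attained both before and after $\#$; and (M$^-$) its global minimum is attained both before and after $\#$. I would then let $L_1$ be the set of strings over $\{x,\olx,\#\}$ with a single $\#$ whose walk satisfies (E) and (M$^+$), and $L_2$ the analogous set for (E) and (M$^-$). By construction $\WP(\FIM_1)=L_1\cap L_2$, so it suffices to show that each $L_i$ is context-free; and since the letter-to-letter homomorphism $\sigma$ swapping $x\leftrightarrow\olx$ and fixing $\#$ negates all heights, it exchanges (M$^+$) with (M$^-$), fixes (E), interchanges $L_\posid$ with $L_\negid$, and carries $L_1$ to $L_2$. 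Hence it is enough to treat $L_1$.

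The heart of the argument is a context-free grammar for $L_1$ built from the grammar $\Gamma^-$ for $L_\negid$ in Lemma~\ref{downzero}. The key structural fact is that a walk in $L_1$ with global maximum $H$ decomposes uniquely as $P\cdot T\cdot Q$: here $P$ is the first-passage portion from $0$ to the first visit to height $H$, $Q$ is the portion from the last visit to $H$ back down to $0$, and $T$ is the middle portion between these two visits, which contains $\#$ and stays at height $\le H$. Shifting down by $H$, the middle portion $T$ is exactly a word of $L_\negid$ with one $\#$ inserted (context-free, since $L_\negid$ is). By the first-passage decomposition, $P=E_0\,x\,E_1\,x\cdots E_{H-1}\,x$, a string of negative-idempotent excursions $E_j\in L_\negid$ separated by up-steps, and $Q=\olx\,F_{H-1}\,\olx\cdots\olx\,F_0$ has the mirror form. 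The crucial point is that the number of up-steps in $P$ equals the number of down-steps in $Q$ (both equal $H$), and this single shared integer is precisely what a context-free grammar can enforce through its recursion. Concretely I would take
\[ S \ra E\,x\,S\,\olx\,E \ \mid\ T, \]
where $E$ generates $L_\negid$ (via $\Gamma^-$) and $T$ generates $L_\negid$ with one $\#$ inserted; this pairs the $j$-th up-step of $P$ with the corresponding down-step of $Q$ at recursion depth $j$ and terminates at the top level with the marked portion $T$. I would then verify, using the first-passage/last-passage decomposition, that this grammar generates exactly $L_1$ (and note that (E) comes for free, since matching the step counts of $P$ and $Q$ forces return to $0$).

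The main obstacle — and the reason the result is $2$-$\CF$ rather than $\CF$ — is that the three conditions cannot be regrouped freely. Pairing (M$^+$) with (M$^-$) while isolating (E) would require a single grammar to match two independent heights across $\#$ in a cross-serial manner, producing a language of $\{a^i b^j c^i d^j\}$ type that is not context-free, consistent with Theorem~\ref{notCF}. Pairing each extremal condition with the endpoint condition, as above, is exactly what makes each factor context-free, because then only one height must be matched and the endpoint condition is subsumed by the $P$--$Q$ step-count matching. I therefore expect the delicate step to be the correctness verification for the grammar of $L_1$, namely that the passage decomposition is faithful and that enforcing the shared value $H$ automatically delivers condition (E).

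Finally, for $\iota(\FIM_1)$ the same analysis applies with $v$ read directly rather than as $v^\inv$, so that the conditions read $\nu(u)=\nu(v)$, $\lambda(u)=\lambda(v)$ and $\mu(u)=\mu(v)$; equivalently one considers the single walk traced by $u\,v^\inv$ with the boundary between the two tapes playing the role of $\#$. I would realise each of the two factors as a context-free relation recognised by a two-tape pushdown automaton that reads $u$, then reads $v$ while negating each step, using the same stack discipline as for $L_1$ to match one extreme together with the endpoints; intersecting the two resulting relations gives $\iota(\FIM_1)$.
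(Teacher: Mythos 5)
Your treatment of $\WP(\FIM_1)$ is correct and is essentially the paper's own proof. Your $L_1$ and $L_2$ coincide with the paper's $L_\nu$ and $L_\lambda$: given the endpoint condition (E), the maximum of the walk after $\#$ equals $\nu(v)$, so (E)$\,\wedge\,$(M$^{+}$) is exactly $\nu(u)=\nu(v)\wedge\mu(u)=\mu(v)$. Your grammar $S\ra E\,x\,S\,\olx\,E\mid T$ is a cosmetic repackaging of the paper's $S\ra ZSZ\mid xS\olx\mid T$, $T\ra ZTZ\mid\olx Tx\mid\#$ (the paper's $T$ generates precisely your ``$L_\negid$ with one $\#$ inserted'', split at the $\#$), with the same first-passage/last-passage correctness argument resting on Lemma~\ref{downzero}; using the letter swap $x\leftrightarrow\olx$ instead of reversal to pass from $L_1$ to $L_2$ is an equally valid symmetry.

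The two-tape paragraph, however, contains a genuine error. A two-tape pushdown automaton that reads all of $u$ before any of $v$ is equivalent to an ordinary PDA on the coded words $u\#v$, so it can only accept relations $R$ for which $\{u\#v\mid (u,v)\in R\}$ is context-free. For your factor $L^\iota_\nu=\{(u,v)\mid \nu(u)=\nu(v),\ \mu(u)=\mu(v)\}$ this coding is not context-free: intersecting with the regular language $x^*\olx^*\#x^*\olx^*$ and noting $\nu(x^a\olx^b)=a$, $\mu(x^a\olx^b)=a-b$ yields $\{x^a\olx^b\#x^a\olx^b\mid a,b\in\N_0\}$, a cross-serial language of exactly the $a^ib^ja^ib^j$ type that you yourself identify as the non-context-free obstruction. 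The inversion in $u\#v^\inv$ is what removes this obstruction in the one-tape case, and it is unavailable here: tape two is read left to right, so $v^\inv$ cannot be simulated. Moreover your specific recipe --- negate each step of $v$ and reuse the $L_1$ stack discipline --- checks the wrong condition: letterwise negation swaps maxima and minima of the $v$-walk, so ``global maximum attained in both halves'' becomes $\nu(u)=\mu(u)+\lambda(v)$ rather than $\nu(u)=\nu(v)$. The paper's automaton $\A_\nu$ avoids both problems by \emph{interleaving} the tapes: it reads the climbing steps in lockstep as pairs $(x,x)$ (and later the descending steps as $(\olx,\olx)$), buffering each tape's negative-idempotent excursions with separate stack symbols $Y_1,Y_2$, and accepts by empty stack exactly the pairs with both components in $(\negid x)^n(\negid\olx)^k\negid$. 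Your argument for $\iota(\FIM_1)$ must be replaced by such an interleaved construction.
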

\begin{proof}
Let $X=\{x\}$, $Y = \{x,\olx\}$ and $L = \WP(\FIM_1,X)$.
We can express $L$ as the intersection of the following two languages:
\[ L_\nu = \{u\#v^\inv \mid u,v\in Y^*, \nu(u) = \nu(v) \land \mu(u) = \mu(v) \}\]
and 
\[ L_\lambda = \{u\#v^\inv \mid u,v\in Y^*, \lambda(u)=\lambda(v) \land \mu(u)=\mu(v) \}.\]
We will show that $L_\nu$ and $L_\lambda$ are each context-free 
and hence $L$ is $2$-$\CF$.
Since $L_\lambda$ is simply the reverse of $L_\nu$, 
it suffices to prove that $L_\nu$ is context-free.

Let $\Gamma_\nu = (V,\Sigma,P,S)$ be the context-free grammar with 
nonterminals $V = \{S,T,Z,Z'\}$, terminals $\Sigma = \{x,\olx,\#\}$ and 
productions $P$ as follows:
\begin{align*}
S&\rightarrow ZSZ \mid xS\olx \mid T\\
T&\rightarrow ZTZ \mid \olx T x \mid \#\\
Z&\rightarrow ZZ \mid \olx Z x \mid \varepsilon.
\end{align*}

Any derivation in $\Gamma_\nu$ can be expressed as
\begin{eqnarray}\label{deriv}
S\Ra \alpha S \beta \ra \alpha T \beta \Ra u_1 u_2 \#v_2 v_1,
\end{eqnarray}
where $\alpha \Ra u_1$, $\beta\Ra v_1$ and $T\Ra u_2\# v_2$.

For any $\alpha'\in \{Z,x\}^*$ and $\beta'\in \{Z,\olx\}^*$ 
with $|\alpha'|_x = |\beta'|_{\olx}$, there is a partial derivation
in $\Gamma_\nu$, not involving the production $S\ra T$,
from $S$ to $\alpha' S \beta'$.  Conversely, any partial 
derivation from $S$ not involving $S\ra T$ results in a 
string $\alpha S \beta$ in which $\alpha$ and $\beta$
can be derived from some such $\alpha'$ and $\beta'$
respectively.

Let $\alpha\in \{Z,x\}^*$ and $w\in Y^*$ with 
$\alpha\Rightarrow^* w$.
By Lemma~\ref{downzero}, the subwords of $w$
produced from instances of $Z$ in $\alpha$ evaluate
to negative idempotents, and so have no effect on 
$\nu(w)$ or $\mu(w)$, whereas each $x$ in $\alpha$
increases both $\nu(w)$ and $\mu(w)$ by $1$.
Hence $\nu(w) = \mu(w) = |\alpha|_x$.
Thus a pair of words $u_1$ and $v_1$ can appear in the derivation 
(\ref{deriv}) if and only if $\nu(u_1) = \mu(u_1) = \mu(v_1^\inv) = \mu(v_1^\inv)$. 
Similarly, it can be shown that $T\Ra u_2\# v_2$ if and only if 
$\nu(u_2) = \nu(v_2^\inv) = 0$ and $\mu(u_2) = \mu(v_2^\inv)\leq 0$.  

Hence $S\Ra u\# v$ if and only if we can write 
$u = u_1u_2$ and $v = v_2v_1$ such that
there exist $l_1,l_2,l_1',l_2',m,n\in \N_0$ with
\begin{align*}
u_1 =_{\FIM_1} (-l_1,n,n) & \hspace{1cm} u_2   =_{\FIM_1} (-l_2,-m,0)\\
v_1^\inv =_{\FIM_1} (-l_1',n,n)  & \hspace{1cm} v_2^\inv =_{\FIM_1}  (-l_2',-m,0).
\end{align*}
If $u\# v\in L_\nu$, then we can express $u$ and $v$ in this way 
by setting $u_1$ to be the shortest prefix of $u$ such that 
$\nu(u_1) = \nu(u)$ and $v_1^\inv$ the shortest prefix of 
$v^\inv$ such that $\nu(v_1^\inv)=\nu(v^\inv)$.  
Conversely, supposing we can express $u$ and $v$ in this way, we have
\begin{align*}
u &=_{\FIM_1} (-l_1,n,n)(-l_2,n,-m) = (-i,n,n-m)\\
v^\inv &=_{\FIM_1} (-l_1',n,n)(-l_2',0,-m) = (-j,n,n-m)
\end{align*}
for some $i,j\in \N_0$.  That is, $u\#v\in L_\nu$.

Hence $L_\nu$ is generated by $\Gamma_\nu$ and is context-free, 
and therefore $L_\nu^\rev = L_\lambda$ is also context-free.  
Thus $\WP(\FIM_1,X) = L_\nu\cap L_\lambda$ is $2$-$\CF$.

For variety, we give an automaton proof for $\iota(\FIM_1,X)$.
Define sublanguages $L^\iota_\nu$ and $L^\iota_\lambda$ of 
$Y^*\times Y^*$ analogously to $L_\nu$ and $L_\lambda$.
Let $x_1 = (x,\epsilon)$, $x_2 = (\epsilon,x)$ and define 
$\olx_1$, $\olx_2$ similarly.
Reading $x_i$ or $\olx_i$ means that we read an $x$ or $\olx$ 
from the $i$-th tape and nothing from the other tape.
Define a pushdown automaton $\A_\nu$ with states $q_0,q_1$
by the following transitions for $i=1,2$ ($Z$ is the bottom-of-stack symbol):
\begin{align*}
(q_0,Z,(x,x)) &\mapsto (q_0,Z)&			(q_1,Z,(\olx,\olx)) &\mapsto (q_1,Z)\\
(q_0,Z,\olx_i) &\mapsto (q_0,Y_iZ)&		(q_1,Z,\olx_i) &\mapsto (q_1,Y_iZ)\\
(q_0,Y_i,\olx_i) &\mapsto (q_0,Y_iY_i)&		(q_1,Y_i,\olx_i) &\mapsto (q_1,Y_iY_i)\\
(q_0,Y_i,x_i) &\mapsto (q_0,\epsilon)&	(q_1,Y_i,x_i) &\mapsto (q_1,\epsilon).\\
(q_0, Z, \epsilon) &\mapsto (q_1,Z)
\end{align*}
The language $\A_\nu$ accepts by empty stack consists of all pairs $(u,v)$ where 
$u,v\in (E^- x)^n (E^- \olx)^k E^-$ for some $n,k\in \N_0$, which is precisely
the language~$L^\iota_\nu$.  Switching the roles of $x$ and $\olx$ in $\A_\nu$
gives rise to a pushdown automaton $\A_\lambda$ accepting $L^\iota_\lambda$.
Hence $\iota(\FIM_1,X)$ is also $2$-$\CF$.
\qed
\end{proof}


\begin{theorem}\label{coCF}
Both versions of the co-word problem of $\FIM_1$ are 
context-free.
\end{theorem}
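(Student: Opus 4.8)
The plan is to exploit closure of the context-free languages under finite union, writing each version of the co-word problem as a union of finitely many context-free ``disagreement'' languages. Recall that $\hat u=\hat v$ holds precisely when $\lambda(u)=\lambda(v)$, $\nu(u)=\nu(v)$ and $\mu(u)=\mu(v)$, so that $u\neq_{\FIM_1}v$ iff at least one of these three equalities fails. For the two-tape version (which is literally the complement of $\iota(\FIM_1,X)$) I would build a two-tape pushdown automaton for each of $\nu(u)\neq\nu(v)$, $\lambda(u)\neq\lambda(v)$ and $\mu(u)\neq\mu(v)$ and take their union. For the one-tape version it is cleaner to use the path description of $\WP$ already recorded above: $\hat u=\hat v$ iff the path traced by $u\#v^\inv$ ends at $0$ and attains its global maximum and its global minimum each both before and after the $\#$. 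Negating, $u\#v^\inv$ lies in the co-word problem iff the path fails to return to $0$, \emph{or} its maximum is missed on one side of the $\#$, \emph{or} its minimum is missed on one side; each of these is a property of a single left-to-right path and so is amenable to a pushdown automaton.

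The routine cases are the displacement conditions: ``$\mu(u)\neq\mu(v)$'', equivalently ``the path of $u\#v^\inv$ does not end at $0$'', is recognised by a counter (stored on the stack as a signed unary value) that accepts when the net numbers of $x$'s and $\olx$'s differ. The heart of the argument is the comparison of maxima by a \emph{single} pushdown store. Here I would read the $u$-part while maintaining the invariant that the stack holds the current shortfall below the running maximum, i.e. $\nu(\cdot)-(\text{current height})$ of the prefix read so far: an up-step pops a symbol unless the stack is already at the running maximum (empty), in which case it is a no-op, and a down-step pushes. At the $\#$ this leaves exactly $\nu(u)-\mu(u)$ symbols on the stack, recording the height of the maximum of the first part relative to the $\#$. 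Reading the $v^\inv$-part, I reinterpret these symbols as the gap below that recorded level and again pop on up-steps and push on down-steps; the automaton then either \emph{accepts on an attempted pop from the empty stack} (the second part rises above the recorded level, so the global maximum is missed before the $\#$) or \emph{accepts only if the stack never empties} (the second part stays strictly below, so the maximum is missed after the $\#$). These two machines together recognise ``the maximum is not attained on both sides''; reflecting $x\leftrightarrow\olx$ gives the corresponding machines for the minimum. A final intersection with the regular language $Y^*\#Y^*$ discards ill-formed inputs, and the union of all these context-free languages is the one-tape co-word problem.

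For the two-tape co-word problem the same idea applies with the two inputs on separate tapes and no inversion needed. To recognise $\nu(u)>\nu(v)$ I would read $u$, tracking its height on the stack and nondeterministically ``freezing'' at a prefix of height $H$ (so $H$ symbols remain), then read $v$ and verify that it never attains height $H$; this accepts exactly when some $H\le\nu(u)$ strictly exceeds $\nu(v)$, i.e. when $\nu(u)>\nu(v)$. Swapping the tapes gives $\nu(u)<\nu(v)$, reflecting $x\leftrightarrow\olx$ gives the two $\lambda$-inequalities, and a signed counter gives $\mu(u)\neq\mu(v)$; their union is the complement of $\iota(\FIM_1,X)$.

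I expect the main obstacle to be precisely this single-stack bookkeeping: a pushdown automaton must simultaneously follow a height that fluctuates in both directions \emph{and} remember a maximum computed earlier for a later comparison, and a naive approach would seem to need two independent counters. The shortfall-below-the-running-maximum invariant is what resolves this, since it tracks the fluctuating height with one store while conveniently depositing exactly the threshold needed for the second phase; getting the boundary behaviour right (the no-op at a new maximum, and distinguishing ``reaches the level'' from ``rises above the level'' according to whether the empty stack is merely touched or over-popped) is the delicate point to verify.
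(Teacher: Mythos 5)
Your proposal is correct and takes essentially the same route as the paper: the paper likewise characterises membership in the co-word problem via the path of $u\#v^\inv$ (failing to end at $0$, or the maximum/minimum being missed on one side of the $\#$) and writes each version as a finite union of three context-free ``disagreement'' languages, one per coordinate of $(-\lambda,\nu,\mu)$, with the two-tape version handled by a union of pushdown automata $\B_\mu,\B_\nu,\B_\lambda$ exactly as you describe. The only differences are presentational: the paper gives an explicit grammar rather than PDAs for the one-tape version, and its automaton $\B_\nu$ keeps $X^{\nu(u)}Y^{\nu(u)-\mu(u)}$ on the stack --- precisely your running-maximum-plus-shortfall invariant --- where for the two-tape comparison you instead nondeterministically freeze a level $H\leq \nu(u)$.
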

\begin{proof}
Let $K = \coWP(\FIM_1,X) = \{u\#v^\inv \mid u,v\in Y^*, u\neq_{\FIM_1} v\}$.  
A word $w =u\#v$ with $u,v\in Y^*$ is in 
$K$ if and only if the path traced out by $w$ starting at $0$ either does 
not end at $0$, or its minimum or maximum value is not achieved both
before and after $\#$ (recall that this includes not being achieved at
the end of $u$).  
Thus a context-free grammar for $K$ with start symbol $S$ is given by the 
following productions:
\begin{align*}
S&\ra M \mid U\mid D   &  U&\ra ZxU\olx Z \mid xE\olx Z\# \mid \#ZxE\olx Z \\
M&\ra ExA\mid E\olx B   &  D&\ra Z' \olx DxZ' \mid \olx ExZ'\#\ \mid \#Z'\olx Ex Z'\\  
A&\ra xA \mid EAE\mid \epsilon & Z&\ra ZZ\mid \olx Zx\mid \epsilon\\
B&\ra \olx B \mid EBE\mid \epsilon & Z&'\ra Z'Z'\mid xZ'\olx\mid \epsilon.\\
E&\ra ZE\mid Z'E\mid \epsilon &&
\end{align*}
$M$ generates all $u\#v^\inv$ with $\mu(u)\neq \mu(v)$;
$U$ generates all $u\#v^\inv$ with $uv^\inv$ idempotent but 
$\nu(u)\neq \nu(v)$, and $D$ does the same as $U$ but for $\lambda$
instead of $\nu$.

The two-tape co-word problem of $\FIM_1$ with respect to $X$ is
the language $M = \{(u,v)\in Y^*\times Y^* \mid u\neq_{\FIM_1} v\}$.
A pushdown automaton recognising $M$ can be expressed as 
the union of automata $\B_\mu$, $B_\nu$, $B_\lambda$.
The automaton $\B_\mu$ checks that 
$|u|_x - |u|_{\olx} \neq |v|_x - |v|_{\olx}$
for input $(u,v)$, and thus accepts all pairs with $\mu(u)\neq \mu(v)$.
The automaton $\B_\nu$ has states $q_0,q_1,q_2,f$,
with $f$ being the unique final state, input symbols 
$x_i$, $\olx_i$ (as in the proof of Theorem~\ref{2CF}) and transitions:
\begin{align*}
(q_0,x_1,Z)&\ra (q_0,XZ)  & (q_0,\epsilon,*)&\ra (q_1,*)
&(q_2,x_2,Z)&\ra (f,Z)\\
(q_0,x_1,X)&\ra (q_0,XX)  & (q_1,\epsilon,Y)&\ra (q_1,\epsilon)
&(q_2,x_2,X)&\ra (q_2,\epsilon)\\
(q_0,x_1,Y)&\ra (q_0,\epsilon) & (q_1,x_2,X)&\ra (q_2,\epsilon)
&(q_2,x_2,Y)&\ra (q_2,\epsilon)\\
(q_0,\olx_1,*)&\ra (q_0,Y*)  &  (q_1,\olx_2,X)&\ra (q_2,YX)
&(q_2,\olx_2,*)&\ra (q_2,Y*)\\
&&&&(q_2,\epsilon,X)&\ra (f,X),
\end{align*}
where $Z$ is the bottom-of-stack marker and $*$ 
denotes any stack symbol ($X,Y,Z$).
In state $q_0$, $X^{\nu(u)} Y^{\nu(u)-\mu(u)}$ is placed on the stack.
State $q_1$ removes all $Y$'s.  State $q_2$ then checks 
$\nu(v)$ against $\nu(u)$, moving to the final state $f$ if we
either find that $\nu(v)>\nu(u)$ or nondeterministically if 
$\nu(v')<\nu(u)$ for the prefix $v'$ of $v$ read so far, in which 
case $\B_\nu$ accepts if there is no further input.  
Thus $\B_\nu$ accepts the language of all $(u,v)$ 
with $\nu(u)\neq \nu(v)$.
The automaton $\B_\lambda$ is obtained by swapping
the roles of $x_i$ and $\olx_i$ in $\B_\nu$, and accepts
$(u,v)$ with $\lambda(u)\neq \lambda(v)$.
\qed
\end{proof}

Given the model of elements of $\FIM_1$ as marked intervals in $\Z$,
stack automata provide possibly the most natural class of automata to 
consider as acceptors of its word problem.  It turns out that it suffices
to use a checking stack automaton.

\begin{theorem}\label{stack}
$\WP(\FIM_1)$  and $\iota(\FIM_1)$ are each recognised by a checking 
stack automaton.
\end{theorem}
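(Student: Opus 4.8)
The plan is to build a checking stack automaton for $\WP(\FIM_1,X)$ that uses the stack to record the ``profile'' of the Munn tree (its leftmost and rightmost extent together with the marked point), writing this profile onto the stack during an initial phase and then verifying it in read-only mode. Recall from the discussion preceding Theorem~\ref{2CF} that $u\#v^\inv\in\WP(\FIM_1,X)$ exactly when the path traced by $uv^\inv$ starts and ends at $0$, and both its maximum $\nu$ and its minimum $-\lambda$ are attained at least once in the $u$-portion and at least once in the $v^\inv$-portion (with the value at the $\#$ counting for both sides). A checking stack automaton may freely write to its stack only until it first reads from it; thereafter the stack contents are frozen and only the pointer moves. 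So the natural division of labour is: nondeterministically \emph{guess} the interval $[-\lambda,\nu]$ spanned by the whole word, commit it to the stack as a block of $\lambda+\nu+1$ cells (one per integer position, with $0$ marked), and then use pointer motions to track the current position of the path as it is read, checking the required extremal conditions against the frozen stack.

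First I would, in the write-only phase, push a string encoding the integer interval, say using a distinguished symbol for the cell corresponding to position $0$ and ordinary symbols for the others, so that the pointer can later sit over the cell representing the current height of the path. Once reading of the input begins, each $x$ moves the pointer one cell in the positive direction and each $\olx$ one cell in the negative direction; since this is the first access to the stack, the contents are now fixed. During the scan of $u$ I would set flags recording whether the pointer ever reaches the topmost guessed cell (certifying $\nu$ is attained before $\#$) and whether it ever reaches the bottommost cell (certifying $-\lambda$ before $\#$); the automaton rejects (by guessing wrongly and getting stuck, i.e.\ by having no accepting run) if the pointer ever tries to move past either end of the frozen block, which enforces that the guessed interval really bounds the path. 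Upon reading $\#$ I would record the marked point as the current cell, and then repeat the same extremal-attainment checks while scanning $v^\inv$, finally accepting only if the pointer returns to the $0$-cell at end of input, if both extremes were attained on each side, and if the marked point recorded after $\#$ agrees with where $u$ ended.

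The main obstacle I expect is not the conceptual design but arranging for the checking stack discipline to do exactly one thing: the extent $[-\lambda,\nu]$ must be guessed \emph{before} any stack reading occurs, yet its correctness can only be verified \emph{while} reading the input and moving the pointer. The resolution is that correctness of the guess is enforced passively: an overshoot of the pointer beyond the frozen block has no legal transition, so only runs in which the guessed interval contains the path survive, while runs that guess an interval strictly larger than $[-\lambda,\nu]$ fail the ``extreme attained on both sides'' test because a too-high or too-low cell is never visited. Thus the nondeterminism together with the extremal-flag checks pins down the interval uniquely among accepting runs, and I need only verify carefully that the flag bookkeeping correctly implements the ``attained before and after $\#$, with the $\#$-value counting for both'' condition. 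For $\iota(\FIM_1,X)$ the same machine works with the two-tape reading convention of Theorem~\ref{2CF}, reading $u$ from the first tape and $v$ from the second and comparing endpoints, marked points and extremes in the identical fashion; I would remark only that the two-tape variant needs no $\#$ since the tape boundary plays its role. Finally, since checking stack languages form a subclass of the ET0L languages, this immediately yields the ET0L claim advertised in the introduction.
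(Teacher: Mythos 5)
For $\WP(\FIM_1,X)$ your construction is essentially the one in the paper: a nondeterministic write phase laying out the guessed interval $[-l,n]$ cell by cell with a distinguished $0$-cell, a frozen checking phase in which the pointer traces the path of the input, finite-state flags recording whether the top and bottom cells are attained before and after $\#$, failure by absence of transitions on overshoot, and acceptance only at the $0$-cell at end of input; your observation that a too-large guess dies passively on the attainment test is exactly the paper's mechanism as well. One caveat: ``record the marked point as the current cell'' upon reading $\#$ is not literally an available operation (the cell index is unbounded, so it cannot go into the finite state, and the stack is frozen, so it cannot be written), but for $\WP$ it is also unnecessary: since the pointer never jumps, the scan of $v^\inv$ simply continues from the cell $\mu(u)$, and returning to the $0$-cell at the end already certifies $\mu(u)=\mu(v)$. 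Read charitably, your $\WP$ half is correct and matches the paper's proof.

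The $\iota(\FIM_1,X)$ half, however, has a genuine gap: your claim that ``the two-tape variant needs no $\#$ since the tape boundary plays its role'' is false, because the second tape carries $v$, not $v^\inv$. A one-way automaton reads $v$ forwards, so its path must be traced starting from the $0$-cell rather than continuing from $\mu(u)$, and the quantity $\mu(u)$ that must then be compared with $\mu(v)$ is unbounded and cannot be remembered in finite state once the stack is frozen. Concretely, on input $(x,\olx)$ a boundary-as-$\#$ machine accepts: guess the interval $[0,1]$; the combined path $0\to 1\to 0$ returns to $0$ and attains both extremes on both sides of the boundary (the boundary value counting for both), yet $x\neq_{\FIM_1}\olx$. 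Nor does an interleaved two-tape reading in the style of Theorem~\ref{2CF} rescue this, since a single stack pointer cannot track two independent path positions whose difference is unbounded. The missing idea — which is exactly the paper's modification — is to mark, already during the write phase, a nondeterministically chosen cell $m$ in addition to the $0$-cell, to require the pointer to sit on this marked cell when the end-of-tape symbol of each tape is read, and to insert an intermediate state returning the pointer to the $0$-cell between the two tapes. Each tape is then checked independently against the full guessed profile $(-l,n,m)$, and the union over all guesses yields $\iota(\FIM_1,X)$.
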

\begin{proof}
The idea of the checking stack automaton for $\WP(\FIM_1)$ is to use the stack 
contents as a model for an interval of integers $[-l,n]$ 
(chosen nondeterministically before beginning to read the input), 
and check for input $u\#v^\inv$ whether $\hat{u} = \hat{v} = (-l,n,m)$ 
for some $m\in [-l,n]$.
Following the set-up phase, the stack contents will always be of the form
$L^l O R^n$ for some $l,n\in \N_0$, with the leftmost symbol $L$ or $O$
being marked with a superscript $^-$, and the rightmost symbol $O$ or $R$
being marked with a superscript $^+$.  Such a string represents a guess 
that the input string $u\# v^\inv$ will have $\lambda(u) = \lambda(v) = l$ 
and $\nu(u) = \nu(v) = n$.  
For $\alpha\in L^* O R^*$, we denote the string 
`$\alpha$ with marked endpoints' by $[\alpha]$.  For example, 
$[LLORR] = L^-LORR^+$ and $[O] = O^\pm$.
Before beginning to consume the input, the stack marker is moved to 
$O^{(+,-,\pm)}$.

During the checking phase, the automaton $\A$ moves up and down the 
stack, tracing out the path given by $u\#v^\inv$, accepting if and only
if three conditions are satisfied:
(i)
both the left and right endpoints of $[\alpha]$ are reached at least 
once before and after the $\#$;
(ii)
the automaton never attempts to move beyond the endpoints of $[\alpha]$; and
(iii)
the automaton ends with the stack marker pointing at $O^{(+,-,\pm)}$.
If during the set-up phase the string $[L^l O R^n]$ was placed on the stack, 
then the set of words accepted by $\A$ following that particular set-up 
will be the language
$\{ u\#v^\inv \mid u,v\in Y^*, \lambda(u) = \lambda(v) = l, \nu(u) = \nu(v) = n,
\mu(u) = \nu(v)\}$.

More formally, the checking transitions of $\A$ are described as follows,
using states $\{q_i, q_i^+, q_i^-, q_i^*,f \mid i=1,2\}$, with $f$ the unique
final state.
Following the setup phase (which can be achieved non-deterministically 
using two states), $\A$ is in state $q_1$, with stack contents 
$[\alpha]$ for some string $\alpha\in L^*OR^*$, and the stack marker 
pointing at the symbol corresponding to $O$ in $[\alpha]$.
The symbol $\$$ is an end-of-input marker, standardly included 
in the definition of stack automata.
Let $\Delta^- = \{O^-, R^-\}$ and $\Delta^+ = \{O^+, L^+\}$. 
The left-hand side of a transition represents the current 
automaton configuration (state,input,stack symbol). 
The right-hand side has first component the state to 
be moved to, and second component the direction in which
to move the stack marker (with $-$ denoting no change).
The full set of stack symbols is 
$\Gamma = \{L^{(+,-)}, O^{(+,-,\pm)}, R^{(+,-)}\}$.
For $i=1,2$:
\begin{tabular}{llll}
$(q_1,\#,O^\pm)\mapsto (q_2^*,-)$, &&\\
$(q_i,x,C)\mapsto (q_i,\uparrow)$, 
&$\mathrlap{C\in \{L,O,R\}}$ &\\
$(q_i,\olx,C)\mapsto (q_i,\downarrow)$, 
&$\mathrlap{C\in \{L,O,R\}}$ &\\
$(q_i,x,C)\mapsto (q_i^-,\uparrow)$, 
&$C\in \Delta^-$
& \; $(q_i^*,x,C)\mapsto (q_i^*,\uparrow)$, 
&$C\notin \{R^+, O^+, O^\pm\}$\\
$(q_i,\olx,C)\mapsto (q_i^+,\downarrow)$, 
&$C\in \Delta^+$ 
& \; $(q_i^*,\olx,C)\mapsto (q_i^*,\downarrow)$, 
&$C\notin \{L^-, O^-, O^\pm\}$\\
$(q_i^+,x,C)\mapsto (q_i^*,\uparrow)$, 
&$C\in \Delta^-$ 
& \; $(q_1^*,\#,C)\mapsto (q_2,-)$,  & $C\in \Gamma$\\
$(q_i^-,\olx,C)\mapsto (q_i^*,\downarrow)$, 
&$C\in \Delta^+$ 
& \; $(q_2^*,\$,C)\mapsto (f,-)$,  & $C\in \{O, O^-, O^+, O^\pm\}$.\\
\end{tabular}\\

Note that these transitions involve no push or pop operations,
so $\A$ is a checking stack automaton.
Now assume that $\A$ has reached the reading phase, with 
stack contents $[L^lOR^n]$ for some $l,n\in \N_0$.  Let $L_{(l,n)}$
denote the language of all words accepted by $\A$ from this configuration.
The case $(l,n) = (0,0)$ is degenerate, since $[O] = O^\pm$ and the 
only path from $q_0$ to $f$ in this case is on input $\#\$$, which is 
exactly as desired since the empty word is the only representative of
the identity $(0,0,0)$ in $Y^*$.  Henceforth we will assume 
at least one of $l$ or $n$ is non-zero.

With few exceptions, the automaton moves up the stack on input 
$x$ and down on $\olx$.  The exceptions are when this would otherwise 
result in moving beyond the top or bottom of the stack.  In these cases 
there are no transitions defined and so the automaton fails.
Thus for $w\in Y^*$ the stack marker traces out the path given by 
$w$, provided this path remains within the interval $[-l,n]$.

When the automaton is in state $q_i$ and has reached the top of the stack
(indicated by a symbol in $\Delta^+$), on the next input it either fails (on $x$) 
or moves to state $q_i^+$ (on $\olx$).  Similarly, after reaching the bottom of 
the stack (symbols in $\Delta^-$), the automaton either fails (on $\olx$) or moves 
to $q_i^-$ (on $x$).  Following either of these events, the automaton will move
to state $q_i^*$ after reaching the opposite end of the stack, provided it does not 
fail.  Thus being in state $q_0^*$ indicates that the automaton has read some 
$u'\in Y^*$ with $\lambda(u') = l$ and $\nu(u') = n$.

The only transition on the symbol $\#$ is from state $q_0^*$ to 
$q_1$ (regardless of stack symbol),
and the only transition on $\$$ is from $q_1^*$ to the final state $f$ 
and requires the automaton to be pointing at $O^{(+,-)}$
(both of these transitions leave the stack unchanged). 
Hence $L_{(l,n)}$ contains exactly those words in $Y^*\#Y^*\$$
which trace out a path in $[-l,n]$ starting and ending at $O^{(+,-)}$ which 
visit the top and bottom of the stack each at least once before and after 
the $\#$; that is, $L_{(l,n)}$ consists of all $u\#v^\inv\$$ such that 
$u\#v^\inv$ is in $\WP(\FIM_1,X)$ and $\lambda(u)=l$, $\nu(u)=n$,
as desired.
Since the language accepted by $\A$ is $\bigcup_{l,n\in \N_0} L_{(l,n)}$,
we conclude that $\A$ accepts $\WP(\FIM_1,X)$.

To recognise $\iota(\FIM_1,X)$, we make a few small modifications to $\A$:
in the setup phase, we additionally mark some symbol of the stack 
contents $[\alpha]$ to denote a guess as to the location of $\mu(u)=\mu(v)$
(where the input is $(u\$,v\$)$).  
In states $q_i,q_i^+,q_i^-,q_i^*$, we read from the $i$-th tape ($i=1,2$).  
On reaching the end symbol $\$$ on each tape, we only proceed if the 
stack marker is pointing at the marked symbol.  
We introduce an intermediate state between $q_1^*$ and $q_2$ which 
returns the stack marker to the symbol corresponding to $O$ in $\alpha$.
In all other respects, the automaton behaves the same as $\A$.
Thus the stack contents of the modified automaton represent an element 
$(-l,n,m)$ of $\FIM_1$, and with these stack contents the automaton 
accepts all $(u,v)$ such that $u$ and $v$ both evaluate to $(-l,n,m)$.
Evaluating over all possible stack contents yields $\iota(\FIM_1,X)$.
\qed
\end{proof}

Note that the classes of $2$-$\CF$ and checking stack languages are 
incomparable.  The language $\{ww\mid w\in A^*\}$ for $|A|\geq 2$
is not poly-$\CF$
(an easy application of \cite[Theorem~3.9]{Bro14}), but is accepted 
by a checking stack automaton that starts by putting a word
$w$ on the stack and then checks whether the input is $ww$.
The language $\{(ab^n)^n\mid n\in \N\}$ is not even indexed
\cite[Theorem~5.3]{HopUll}, but is $3$-$\CF$.

Since E(D)T0L languages have been shown to describe various
languages arising in group theory \cite{CDE,CEF} (but not word problems),
it is worth noting the following.

\begin{corollary}\label{ET0L}
$\WP(\FIM_1)$ and $\iota(\FIM_1)$ are both ET0L.
\end{corollary}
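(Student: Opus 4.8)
The plan is to read this off immediately from Theorem~\ref{stack} together with the automaton characterization of ET0L languages recalled in the introduction. By Theorem~\ref{stack}, both $\WP(\FIM_1)$ and $\iota(\FIM_1)$ are accepted by checking stack automata, so it suffices to know that every checking stack language is ET0L. This is a known consequence of van Leeuwen's characterization \cite{vLee}: an ET0L automaton is a checking stack automaton carrying an additional pushdown stack whose pointer is constrained to move in lockstep with the checking-stack pointer. A pure checking stack automaton is realised as the special case in which the auxiliary pushdown merely mirrors the pointer's height --- pushing one fixed dummy symbol on each upward move and popping it on each downward move --- so that the pushdown contributes nothing to the acceptance logic. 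With the containment of checking stack languages in the ET0L languages in hand, the corollary is immediate.

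The only point requiring care is this inclusion of checking stack languages in the ET0L languages. To keep the argument self-contained I would describe explicitly how an accepting run of the automaton $\A$ from the proof of Theorem~\ref{stack} is simulated in van Leeuwen's two-stack model: the checking stack holds exactly the string $[\,\alpha\,]$ that $\A$ guesses during its setup phase, and the synchronized pushdown records the current pointer position by pushing a dummy symbol on each $x$-move and popping one on each $\olx$-move. Since $\A$ already performs \emph{no} push or pop operations on its own stack (as noted in its construction), every transition of $\A$ translates directly into a legal transition of the ET0L automaton, and the accepting configurations coincide. Thus the languages accepted are identical, and both $\WP(\FIM_1)$ and $\iota(\FIM_1)$ are ET0L.

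Alternatively --- and this is the more explicit route hinted at after the statement --- one can exhibit an ET0L system generating $\WP(\FIM_1)$ directly, using the context-free grammars $\Gamma^+$ and $\Gamma^-$ for $\posid$ and $\negid$ from Lemma~\ref{downzero} as subgrammars. The table mechanism of ET0L is well-suited to the task: one uses tables that simultaneously grow the two endpoints $-l$ and $n$ of the Munn interval on \emph{both} sides of the $\#$, so that applying a single table in parallel to matched nonterminals automatically enforces $\lambda(u)=\lambda(v)$ and $\nu(u)=\nu(v)$, and then fills in the idempotent stretches of the path via $\Gamma^+$ and $\Gamma^-$; the case $\iota(\FIM_1)$ is analogous. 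I expect the checking-stack route to be the cleaner of the two, with its only (minor) obstacle being the careful justification of the checking-stack-to-ET0L inclusion above. In the direct construction the genuine difficulty is bookkeeping: ensuring that each of the two extremes $-l$ and $n$ is attained on both sides of the $\#$ while keeping the two sides synchronized. The parallel rewriting of ET0L supplies the synchronization for free, but guaranteeing the ``reached before and after the $\#$'' condition for both extremes at once is the delicate part of that approach.
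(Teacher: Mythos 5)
Your primary argument is exactly the paper's proof: Corollary~\ref{ET0L} follows from Theorem~\ref{stack} together with the inclusion of checking stack languages in the ET0L languages, which is already established in \cite{vLee} (so your explicit dummy-symbol simulation, while correct, is not needed). The alternative direct ET0L grammar you sketch is a genuine second route --- the author mentions having built such a grammar with $9$ tables and $11$ nonterminals --- but your citation-based argument suffices and matches the paper.
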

\begin{proof}
This follows from Theorem~\ref{stack} and the fact that the class of 
checking stack languages is contained in the ET0L languages
\cite{vLee}.  
\qed
\end{proof}

The author has constructed nondeterministic ET0L grammars 
for $\WP(\FIM_1,X)$ and $\iota(\FIM_1,X)$ with 
$9$ tables and $11$ nonterminals.  
The nondeterminism arises from the fact that for any word 
$w\in Y^*$, we may insert idempotents arbitrarily at any point 
in $w$ without changing the element represented, provided that
these idempotents are not `large' enough to
change the value of $\nu(w)$ or $\lambda(w)$.

\begin{conj}
Neither $\WP(\FIM_1)$ nor $\iota(\FIM_1)$ is EDT0L.
\end{conj}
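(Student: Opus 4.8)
The plan is to argue by contradiction, using that the class of EDT0L languages is closed under intersection with regular languages and (as noted in the introduction) under inverse generalised sequential machine mappings, and then to isolate a combinatorial core of $\WP(\FIM_1)$ on which the determinism of EDT0L systems can be made to fail. Recall that $u\#v^\inv\in\WP(\FIM_1)$ iff $\nu(u)=\nu(v)$, $\lambda(u)=\lambda(v)$ and $\mu(u)=\mu(v)$. The condition $\mu(u)=\mu(v)$ is equivalent to $|w|_x=|w|_{\olx}$ for $w=u\#v^\inv$, a benign linear condition that every language class considered here handles easily; so the difficulty lies entirely in forcing the two \emph{extrema} $\nu$ and $\lambda$ of two otherwise unconstrained paths $u$ and $v$ to coincide simultaneously. (The plan for $\iota(\FIM_1)$ is the same, and I expect the two statements to stand or fall together.)

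The first real difficulty, and the reason the statement resists elementary tools, is that every \emph{obvious} restriction of $\WP(\FIM_1)$ is in fact EDT0L. Intersecting with the regular language $x^*\olx^*\#x^*\olx^*$ yields the nested two-counter language $\{x^a\olx^b\#x^b\olx^a : a,b\in\N\}$; restricting $v^\inv$ to the canonical excursion form $x^n\olx^{n+l}x^{l+m}$ collapses $\WP(\FIM_1)$ to a copy language $\{W\#W\}$, and restricting $v$ instead yields an inverse-mirror language $\{W\#W^\inv\}$. Since in an EDT0L system each table has exactly one production per nonterminal, each table acts as an endomorphism of the free monoid on terminals and nonterminals, and each of these languages is EDT0L by the standard device of building two copies of a word simultaneously, prepending each fresh letter to two distinct nonterminals $c_1,c_2$ and finally erasing them. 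Even the bump-sequence language $\{x^a\olx^a : a\ge 0\}^*$ is EDT0L, generated deterministically by always completing one bump before opening the next. Consequently any useful reduction must retain genuine \emph{two-dimensional} freedom --- unbounded depth \emph{and} unbounded width of path shape coexisting --- rather than collapsing to a nested, copy, mirror, or sequential pattern; choosing the right witness family is thus itself a hurdle.

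Concretely I would try to show that $\WP(\FIM_1)$ is, in the appropriate technical sense, forced to contain the family of \emph{matched-height excursion pairs}: words $u\#v^\inv$ in which $u$ and $v$ are arbitrary paths never descending below $0$, of equal height $\nu(u)=\nu(v)$. Such a $u$ is an arbitrary Dyck-type path, with unboundedly many independent local branch/close decisions, and these must be \emph{simultaneously present} in $u$ and in $v$ in order to certify equal height across the $\#$; so the ``finish one substructure before beginning the next'' technique that rescues the bump-sequence language is unavailable. The key lemma would then be that no EDT0L system generates this family. For this I would take a long accepted word, follow the corresponding sequence of tables, locate by a pigeonhole argument a pumpable loop in this sequence, and observe that --- precisely because each table rewrites \emph{every} occurrence of a nonterminal identically --- pumping the loop modifies several positions of the output word by equal increments. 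The combinatorial heart is to design the witness family so that every bounded, uniformly applied increment necessarily either destroys balance or raises $\nu(u)$ without raising $\nu(v)$, contradicting membership.

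I expect this last lemma to be the main obstacle, and it is why the result is stated only as a conjecture. There is no pumping lemma for EDT0L sharp enough to separate it from ET0L, and the property one wants to exploit --- that finitely many nonterminals, each rewritten uniformly at every step, can sustain only boundedly many independently-shaped coexisting substructures --- is a \emph{global} constraint on the sentential form, whereas pumping is inherently \emph{local}; reconciling the two is delicate. The standard invariants give no leverage, since $\WP(\FIM_1)$ is ET0L by Corollary~\ref{ET0L} and so shares with genuine EDT0L languages its (possibly non-semilinear) Parikh image, its growth behaviour, and the generic two-site ET0L pumping property. In effect the difficulty is already fully present in deciding whether the ordinary Dyck language is EDT0L, so a successful proof would most likely require a genuinely new lower-bound technique tailored to the determinism of EDT0L systems.
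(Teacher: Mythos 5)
The statement you are addressing is presented in the paper as a \emph{conjecture}: no proof is given, so there is no argument of the paper's to compare yours against, and the only question is whether your proposal actually closes the gap. It does not, and to your credit you essentially say so yourself. Everything rests on the unproved key lemma that no EDT0L system generates your family of matched-height excursion pairs, and the pumping mechanism you sketch for that lemma has a concrete hole. You propose to locate a loop in the table sequence of a long derivation and argue that, because each table rewrites every occurrence of a nonterminal identically, pumping the loop applies equal bounded increments at several positions, which should ``either destroy balance or raise $\nu(u)$ without raising $\nu(v)$''. But nothing in the EDT0L definition prevents a derivation from maintaining \emph{synchronised} nonterminals on the two sides of the $\#$, so that the loop's uniform action raises $\nu(u)$ and $\nu(v)$ (or $\lambda(u)$ and $\lambda(v)$) by equal amounts and membership in $\WP(\FIM_1)$ is preserved. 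The paper's own ET0L grammar in the appendix works in exactly this coordinated fashion: every production $P\ra\alpha$ is paired with an inverted production $P^\inv\ra\alpha^\inv$ acting on the other side of $\#$, so that Tables 1--4 increase $n$ or $l$ on $u$ and on $v^\inv$ simultaneously. Determinism per table removes the choice of production for a given nonterminal, but it in no way forbids this cross-$\#$ coordination --- indeed your own (correct) observations that $\{W\#W\}$ and $\{W\#W^\inv\}$ are EDT0L show that uniform rewriting sustains precisely the kind of coordinated two-sided growth your witness family would need to exclude.

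So the combinatorial heart of your plan --- designing the witnesses so that \emph{every} pumpable loop is forced to act asymmetrically --- is exactly the open problem, not a step on the way to it, and no tool you invoke supplies it: there is no known pumping or lower-bound lemma separating EDT0L from ET0L of the local-loop kind you describe (the classical EDT0L non-membership arguments of Ehrenfeucht and Rozenberg proceed by quite different, global combinatorial methods), and as you note the usual invariants (Parikh image, growth, generic ET0L pumping) are all blind here since $\WP(\FIM_1)$ is ET0L by Corollary~\ref{ET0L}. Your framing of the difficulty --- that the obvious regular restrictions of $\WP(\FIM_1)$ (nested two-counter, copy, mirror, bump-sequence languages) are all EDT0L, so any separating witness must retain simultaneous unbounded depth and width --- is a sensible and accurate diagnosis, consistent with why the paper leaves the statement as a conjecture; but as it stands your proposal is a research programme with its central lemma unproved and its proof sketch for that lemma refuted by the synchronisation phenomenon above, not a proof.
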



\section{Rank greater than $1$}

The word problem for inverse monoids in higher ranks
is more complex from a language theory perspective.

\begin{lemma}\label{notpolycf}
For any $k\geq 3$, $\WP(\FIM_k)$ is not $(k-2)$-$\CF$.
\end{lemma}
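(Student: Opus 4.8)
The plan is to intersect $\WP(\FIM_k)$ with a bounded regular language that isolates a clean copy-language whose difficulty is governed by $k$ mutually crossing constraints, and then to invoke the poly-context-free lower-bound machinery of \cite{Bro14}. Fix the free generating set $X=\{x_1,\dots,x_k\}$ and let
\[
R = x_1^*\ol{x_1}^*\cdots x_k^*\ol{x_k}^*\,\#\,x_1^*\ol{x_1}^*\cdots x_k^*\ol{x_k}^*.
\]
Since the class of $(k-2)$-$\CF$ languages is closed under intersection with regular languages (an intersection of $k-2$ context-free languages, intersected with $R$, is again such an intersection because the last factor may be intersected with $R$), a hypothetical expression of $\WP(\FIM_k)$ as an intersection of $k-2$ context-free languages would make $\WP(\FIM_k,X)\cap R$ itself $(k-2)$-$\CF$. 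It therefore suffices to identify this intersection and show it is not $(k-2)$-$\CF$.

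First I would compute $\WP(\FIM_k,X)\cap R$ exactly. A word of $R$ has the form $u\#v^\inv$ with $u=x_1^{a_1}\ol{x_1}^{c_1}\cdots x_k^{a_k}\ol{x_k}^{c_k}$ and $v^\inv=x_1^{p_1}\ol{x_1}^{q_1}\cdots x_k^{p_k}\ol{x_k}^{q_k}$, whence $v=x_k^{q_k}\ol{x_k}^{p_k}\cdots x_1^{q_1}\ol{x_1}^{p_1}$. I claim that for $k\geq 2$ such a word lies in $\WP$ precisely when $a_i=c_i=p_i=q_i$ for every $i$, so that
\[
\WP(\FIM_k,X)\cap R=\{\,W\#W \mid W=x_1^{n_1}\ol{x_1}^{n_1}\cdots x_k^{n_k}\ol{x_k}^{n_k},\ n_i\in\N\,\}.
\]
The engine here is a rigidity property of Munn trees: each balanced block $x_i^{n_i}\ol{x_i}^{n_i}$ is an idempotent whose tree is a single spike issuing from the origin (cf.\ Lemma~\ref{downzero}), so a fully balanced $u$ has the ``star'' tree with arms $n_1,\dots,n_k$, all attached at the origin. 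As soon as some block $x_i^{a_i}\ol{x_i}^{c_i}$ is unbalanced, however, every later arm of $u$ hangs off a non-origin vertex; since the blocks of $v$ are traversed in the \emph{opposite} order, the two Munn trees can coincide only if all blocks are balanced, and matching the arms then forces the two stars, and hence the two copies of $W$, to be equal. Making this rigidity argument precise for all $k$ --- carefully tracking the vertex at which each successive arm is attached --- is the most delicate bookkeeping step, and I expect it to be the main obstacle on the $\FIM$-specific side.

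With the characterisation in hand the remaining work is purely language-theoretic, and here I would apply the criterion of \cite[Theorem~3.9]{Bro14}, the same tool that shows $\{ww\}$ is not poly-$\CF$. Regarding $\WP(\FIM_k,X)\cap R$ as a bounded language, its set of exponent tuples $(a_1,c_1,\dots,p_k,q_k)$ is the diagonal cut out by the copy-equalities $a_i=p_i$ and $c_i=q_i$ for $1\leq i\leq k$. Reading positions along the word, the arc realising $a_i=p_i$ runs from position $2i-1$ to position $2k+2i-1$, and for $i<j$ one has $2i-1<2j-1<2k+2i-1<2k+2j-1$, so these arcs pairwise \emph{cross}; thus the defining system contains $k$ pairwise-crossing equalities. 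The lower-bound machinery of \cite{Bro14} bounds the number of crossing matching-constraints realisable by an intersection of $m$ context-free (equivalently, of $m$ finite unions of stratified linear) sets, and $k$ pairwise-crossing constraints cannot be realised with $m=k-2$; the base case $k=3$ merely recovers the non-context-freeness already furnished by Theorem~\ref{notCF}. The crux of this final step, and the second place I expect real work, is the quantitative passage from ``the $k$ matching constraints pairwise cross'' to ``the language is not $(k-2)$-$\CF$'', which is exactly the content that the interchange/stratification argument of \cite{Bro14} is designed to supply.
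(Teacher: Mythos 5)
Your proposal fails at its first step: the claimed characterisation $\WP(\FIM_k,X)\cap R=\{W\#W \mid W=x_1^{n_1}\ol{x_1}^{n_1}\cdots x_k^{n_k}\ol{x_k}^{n_k}\}$ is false. Because the exponents in $R$ range over $\N_0$, blocks may be empty, and then an unbalanced block need not be followed by any later arm that could detect the imbalance. Concretely, take $u=x_1^2\olx_1$ and $v^\inv=x_1\olx_1^2$, all other exponents zero. Then $v=(v^\inv)^\inv=x_1^2\olx_1=u$, so $u\#v^\inv=x_1^2\olx_1\#x_1\olx_1^2$ lies in $\WP(\FIM_k,X)\cap R$, yet $(a_1,c_1,p_1,q_1)=(2,1,1,2)$ is not on your diagonal. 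More generally $x_i^{a}\olx_i^{c}\#x_i^{c}\olx_i^{a}\in \WP(\FIM_k,X)\cap R$ for all $a,c\in\N_0$, since both sides represent the element whose Munn tree is the interval $[\min(0,a-c),a]$ with marked point $a-c$. So the intersection strictly contains your copy language, its exponent-tuple set is not the one you analyse, and the crossing-constraint lower bound is being applied to the wrong language; the whole argument collapses. (Your rigidity sketch tacitly assumes that after an unbalanced block there is a nonempty block of a different generator; replacing every $*$ in $R$ by $+$ would plausibly restore rigidity, but that would itself require the careful attachment-vertex bookkeeping you deferred, and is not what you wrote.)

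The paper sidesteps this trap by intersecting with a \emph{context-free}, not regular, bounded language: $L_k=\{x_1^{m_1}\olx_1^{m_1}\cdots x_k^{m_k}\olx_k^{m_k}\#x_1^{n_1}\olx_1^{n_1}\cdots x_k^{n_k}\olx_k^{n_k} \mid m_i,n_i\in\N_0\}$, in which balancedness of every block is built into the language itself --- the within-block equalities $a_i=c_i$ are inherently non-regular, so no choice of regular $R$ can enforce them. Membership of a word of $L_k$ in the word problem then reduces, simply because idempotents of $\FIM_k$ commute, to the $k$ crossing equalities $m_i=n_i$, and \cite[Theorem~3.12]{Bro14} (not Theorem~3.9, which is the $\{ww\}$ criterion you cite) yields directly that this intersection is not $(k-1)$-$\CF$. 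The arithmetic still closes: if $\WP(\FIM_k,X)$ were $(k-2)$-$\CF$, its intersection with the context-free $L_k$ would be $(k-1)$-$\CF$, a contradiction. Note the trade-off this reveals: the paper pays one level of intersection for the context-free filter and therefore needs the stronger conclusion not-$(k-1)$-$\CF$ for the test language, whereas your regular-filter route would only need not-$(k-2)$-$\CF$ --- but that saving is worthless unless the characterisation step is correct, and here it is not.
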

\begin{proof}
For any $k\geq 2$, let $X_k = \{x_1,\ldots,x_k,\olx_1,\ldots,\olx_k\}$ 
and let 
\[L_k = \{x_1^{m_1} \olx_1^{m_1} \ldots x_k^{m_k} \olx_k^{m_k} \#
x_1^{n_1} \olx_1^{n_1} \ldots x_k^{n_k} \olx_k^{n_k} \mid m_i,n_i\in \N_0\}.\]
Let $W_k = \WP(\FIM_k,X_k)$.  Then $W_k$ consists of all those words in $L_k$
with $m_i = n_i$ for all $i$ (since idempotents in $\FIM_k$ commute).  
By \cite[Theorem~3.12]{Bro14}, $W_k$ is not $(k-1)$-context-free
\footnote{
The language $L^{(2,k)}$ in the referenced result is not precisely
$W_k$, but the associated set of integer tuples differs from that 
associated to $W_k$ only by a constant (arising from the symbol $\#$),
which does not affect stratification properties and therefore does not 
affect the property of not being $(k-1)$-$\CF$.
}.
Since $W_k$ is the intersection of $\WP(\FIM_k,X_k)$ with the context-free language
$L_k$, this implies that $\WP(\FIM_k,X_k)$ is not context-free. 
\qed
\end{proof}

Since for $k\geq 2$ $\FIM_k$ contains submonoids isomorphic to $\FIM_n$ for all $n$,
the following theorem is immediate from Lemma~\ref{notpolycf}.

\begin{theorem}
For any $k\geq 2$, $\WP(\FIM_k)$ is not poly-$\CF$.
\end{theorem}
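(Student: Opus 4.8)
The plan is to deduce the theorem from Lemma~\ref{notpolycf} by transporting the word problem of an arbitrary $\FIM_n$ into that of $\FIM_k$ through a submonoid embedding. Recall first that a language is poly-$\CF$ precisely when it is $m$-$\CF$ for some $m\in\N$, and that $m$-$\CF\subseteq j$-$\CF$ whenever $j\geq m$ (intersect with the context-free language $\Sigma^*$). Hence it suffices to show that $\WP(\FIM_k)$ fails to be $m$-$\CF$ for every fixed $m$. I would argue by contradiction: assume $\WP(\FIM_k)$ is $m$-$\CF$ and exhibit an $\FIM_n$ whose word problem is thereby forced to be $m$-$\CF$, in violation of Lemma~\ref{notpolycf}.

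By the remark preceding the theorem, for $k\geq 2$ the monoid $\FIM_k$ contains a submonoid isomorphic to $\FIM_n$ for every $n$; fix such an embedding $\phi$ and let $y_1,\dots,y_n$ be free generators of $\FIM_n$, with $\phi(y_i)=w_i\in(X_k^\pm)^*$. Let $\psi$ be the monoid homomorphism on $(Y_n^\pm\cup\{\#\})^*$, where $Y_n=\{y_1,\dots,y_n\}$, that sends each $y_i\mapsto w_i$, each $\ol{y_i}\mapsto w_i^\inv$, and fixes $\#$. Since $\phi$ is an embedding, $u=_{\FIM_n}v$ if and only if $\phi(u)=_{\FIM_k}\phi(v)$. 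The one piece of bookkeeping to check is that $\phi$ commutes with word-inverse, $\phi(v^\inv)=(\phi(v))^\inv$, which follows letter by letter from $\phi(\ol{y_i})=w_i^\inv$; consequently $\psi(u\#v^\inv)=\phi(u)\#(\phi(v))^\inv$, and this lies in $\WP(\FIM_k,X_k)$ exactly when $\phi(u)=_{\FIM_k}\phi(v)$, i.e.\ when $u\#v^\inv\in\WP(\FIM_n,Y_n)$. Because $\psi$ fixes $\#$ and sends every other letter to a $\#$-free word, any word mapping into the one-$\#$ language $\WP(\FIM_k)$ must itself carry exactly one $\#$; thus the preimage consists precisely of words of the form $u\#v^\inv$, and I obtain the clean identity $\WP(\FIM_n,Y_n)=\psi^{-1}\bigl(\WP(\FIM_k,X_k)\bigr)$.

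It remains to invoke closure. The map $\psi$ is a letter-to-word homomorphism, hence a generalised sequential machine mapping, and the paper has observed that all the language classes under consideration are closed under inverse gsm mappings; since $\psi^{-1}$ commutes with intersection and preserves context-freeness, it carries $m$-$\CF$ languages to $m$-$\CF$ languages. Therefore, under the assumption that $\WP(\FIM_k)$ is $m$-$\CF$, the language $\WP(\FIM_n)$ is $m$-$\CF$ for every $n$. Now choose $n\geq\max(m+2,3)$, so that $n-2\geq m$ and hence $m$-$\CF\subseteq(n-2)$-$\CF$; this would make $\WP(\FIM_n)$ an $(n-2)$-$\CF$ language, contradicting Lemma~\ref{notpolycf}. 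No value of $m$ can therefore work, and $\WP(\FIM_k)$ is not poly-$\CF$. I expect the genuine work to sit in the middle step --- pinning down the embedding $\phi$ and the substitution $\psi$ so that the reduction is exactly an inverse gsm mapping and the interplay of the separator $\#$ with the word-inverse $v\mapsto v^\inv$ is handled correctly --- whereas the concluding counting argument against Lemma~\ref{notpolycf} is routine once that identity is in place.
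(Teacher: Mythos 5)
Your proof is correct and follows the same route as the paper, which simply remarks that the theorem is ``immediate'' from Lemma~\ref{notpolycf} together with the fact that $\FIM_k$ ($k\geq 2$) contains copies of every $\FIM_n$; you have merely made explicit the details the paper suppresses (the letter-to-word homomorphism $\psi$, the identity $\WP(\FIM_n)=\psi^{-1}(\WP(\FIM_k))$, closure of $m$-$\CF$ under inverse homomorphism, and the choice $n\geq\max(m+2,3)$). All of these details check out, including the handling of $\#$ and of word-inverses, so this is a faithful expansion of the paper's argument rather than a different one.
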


Note that the argument in Lemma~\ref{notpolycf} 
does not work for $\iota(\FIM_k)$, since the set 
$\{(x_1^{m_1} \olx_1^{m_1} \ldots x_k^{m_k} \olx_k^{m_k},
x_1^{m_1} \olx_1^{m_1} \ldots x_k^{m_k} \olx_k^{m_k}) \mid m_i\in \N_0\}$
is context-free.  Writing the idempotent on the second tape in reverse 
(that is, starting with $x_k^{m_k}\olx^{m_k}$)
does not help, as the resulting language is still $2$-$\CF$.
It appears likely that the intersection of $\iota(\FIM_k)$ with the following 
$\CF$ language would not be $(k-1)$-$\CF$:
\[\{(x_1^{l_1} \olx_1^{l_1} \ldots x_k^{l_k} \olx_k^{l_k}
x_1^{m_1} \olx_1^{m_1} \ldots x_k^{m_k} \olx_k^{m_k},
x_1^{n_1} \olx_1^{n_1} \ldots x_k^{n_k} \olx_k^{n_k})
\mid l_i,m_i,n_i\in \N_0\},\]
but proving this would require delicate arguments about 
intersections of stratified semilinear sets beyond the scope 
of this paper.

The author conjectures that neither version of the word problem 
for $\FIM_k$, $k\geq 2$ is indexed.  While a nested stack 
automaton can easily be used to store a Munn tree, there 
appears to be no way to check while reading a word 
$w\in X^*$ that the path traced out by $w$ visits every 
leaf of the stored Munn tree.

\end{document}